\documentclass[11 pt]{amsart}

\usepackage{amsmath, amssymb, amsthm, amsfonts, amsxtra, stmaryrd}
 \usepackage{amsaddr}
\usepackage{tikz-cd, enumerate, yfonts, fancyhdr}

\pagestyle{plain}

\input xy
\xyoption{all}


\theoremstyle{plain}
\newtheorem{theorem}{Theorem}[section]

\newtheorem{corollary}[theorem]{Corollary}
\newtheorem{prop}{Proposition}[section]
\newtheorem{lemma}{Lemma}[section]

\newtheorem*{thm 1.3}{Theorem 1.3}

\theoremstyle{definition}




\setlength{\textwidth}{460pt}
\setlength{\oddsidemargin}{0pt}
\setlength{\evensidemargin}{0pt}
\setlength{\topmargin}{0pt}
\setlength{\textheight}{620pt}


\def\HH{\mathbb{H}}

\def\RR{\mathbb{R}}

\def\ZZ{\mathbb{Z}}






\newcommand{\piLip}{\pi^{\textup{Lip}}}
\newcommand{\dil}{\textup{dil}}
\newcommand{\lsim}{\lesssim}
\newcommand{\gsim}{\gtrsim}
\newcommand{\Lip}{\textup{Lip}}
\newcommand{\Id}{\textup{Id}}

\title{Quantitative nullhomotopy and the Hopf Invariant}
\author{Luis Kumanduri}

\begin{document}

\maketitle

\begin{abstract}
   Let $G: S^{4n-1} \rightarrow S^{2n}$ be a map with nonzero Hopf Invariant. Using the generalized Hopf invariant introduced by Haj\l{}asz, Schikorra and Tyson, we show that any null-homotopy $F: B^{4n} \rightarrow B^{2n+1}$ of $G$ with small $(2n+1)$-dilation must have large $(2n)$-dilation. Conversely, we show that these results are sharp by constructing smooth null-homotopies with arbitrarily small $(2n+1)$-dilation.
\end{abstract}

\section{Introduction}\label{sec:intro}

The $k$-dilation $\dil_k(G)$ of a smooth map $G$ is a generalization of the Lipschitz constant, measuring how much the map stretches or shrinks $k$-dimensional volumes. In this paper we will study the problem of finding extensions of a map with small $k$-dilation, and obstructions that arise from topology.

Maps with small $k$-dilation are closely connected to maps with low-rank derivative. In particular we have that $\textup{rank}(DG) \le k-1$ if and only if $\dil_k(G) = 0$. The first interesting example of a low-rank map is due to Kaufman in \cite{K}, who constructed a surjective map $G: [0,1]^3 \rightarrow [0,1]^2$ with rank $dG \le 1$ almost everywhere.  In \cite{WY}, Wenger-Young constructed rank $n$ extensions of many maps $S^m \rightarrow S^n$ to maps $B^{m+1} \rightarrow B^{n+1}$. In \cite{HST} Haj{\l}asz-Schikorra-Tyson showed that these extensions do not exist for maps with non-trivial Hopf invariant. 

Problems about low-rank extensions are  related to the Lipschitz homotopy groups of the Heisenberg group introduced in \cite{DHLT} by Dejarnette, Haj{\l}asz, Lukyanenko and Tyson. Any Lipschitz map from a Riemannian manifold to the Heisenberg group $\HH^n$ must have rank $\le n$ almost everywhere, and so a necessary condition for a map $S^m \rightarrow \HH^n$ to admit a Lipschitz null-homotopy is that it has a low-rank extension to the ball. Wenger-Young used their construction to show that some surprising elements of the Lipschitz homotopy groups $\piLip(\HH^n)$ were trivial, while Haj{\l}asz-Schikorra-Tyson showed that the groups $\piLip_{4n-1}(\HH^{2n})$ were non-trivial. For most of the groups $\piLip(\HH^n)$, non-triviality is an open question. In \cite{DHLT} and \cite{RW}, $\piLip_n(\HH^n)$ was shown to be non-trivial and in \cite{H}, Haj{\l}asz show that $\piLip_{n+1}(\HH^n)$ for $n \ge 2$ is non-trivial. In \cite{WY} and \cite{WY2}, Wenger-Young showed the groups $\piLip_n(\HH^1)$ for $n \ge 2$ and $\piLip_k(\HH^n)$ for $k < n$ are all trivial. Using different language, Gromov gave an earlier proof of the triviality of $\piLip_k(\HH^n)$ for $k < n$ in \cite{GrCC}. These are the only known examples.  Our hope is that $k$-dilation will become a useful tool for obstructing low-rank extensions and studying $\piLip(\HH^n)$.

Our main result finds an obstruction to the existence of extensions with small $(2n+1)$-dilation.

\begin{theorem}\label{thm:main}
Let $G: S^{4n-1} \rightarrow S^{2n}$ be a smooth map with nonzero Hopf invariant, and let $i: S^{2n} \rightarrow \RR^k$ be a smooth embedding. Let $F$ be a smooth null-homotopy of $i \circ G$ with $2n$-dilation $\dil_{2n}(F) \le C$. Then there is a constant $c(C,G,i) > 0$ so that $\dil_{2n+1}(F) \ge c$. 

\end{theorem}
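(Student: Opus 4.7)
The plan is to derive a bilinear estimate
\[
|H(G)| \;\le\; C(G,i)\,\bigl(1 + \dil_{2n}(F)\bigr)\,\dil_{2n+1}(F),
\]
from which, under the hypothesis $\dil_{2n}(F) \le C$, solving for $\dil_{2n+1}(F)$ produces the required positive constant $c = c(C,G,i)$. The main tool is a pairing of Stokes' theorem with a primitive of $F^*\eta$ supplied by the Poincar\'e homotopy operator on $B^{4n}$.

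First, I would fix a smooth $2n$-form $\eta = \chi \cdot \pi^*\omega$ on $\RR^k$, where $\pi: U \to i(S^{2n})$ is a tubular retraction of a small neighborhood $U$ of $i(S^{2n})$, $\chi$ is a smooth cutoff equal to $1$ on $i(S^{2n})$ and compactly supported in $U$, and $\omega$ is the normalized volume form on $S^{2n}$. Then $i^*\eta = \omega$, and $\|\eta\|_\infty$, $\|d\eta\|_\infty$ depend only on $i$. The crucial algebraic point is that
\[
\eta \wedge \eta \;=\; \chi^2\,\pi^*(\omega \wedge \omega) \;=\; 0,
\]
because $\omega \wedge \omega$ is a $4n$-form on the $2n$-manifold $S^{2n}$. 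This vanishing is what ultimately forces $\dil_{2n+1}$ into the estimate by eliminating the "naive" term $\int F^*\eta \wedge F^*\eta$ and leaving only contributions carrying a factor of $F^*d\eta$.

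For any $(2n-1)$-form $\tilde\alpha$ on $B^{4n}$ with $d(\tilde\alpha|_{S^{4n-1}}) = G^*\omega$, Stokes applied to $\tilde\alpha \wedge F^*\eta$ (using $H^{2n-1}(S^{4n-1}) = 0$ to eliminate the ambiguity in the choice of primitive of $G^*\omega$) yields
\[
H(G) \;=\; \int_{B^{4n}} d\tilde\alpha \wedge F^*\eta \;-\; \int_{B^{4n}} \tilde\alpha \wedge F^*d\eta.
\]
I would then take $\tilde\alpha = K(F^*\eta) + \tilde\delta$, where $K$ is the Poincar\'e homotopy operator associated to the radial retraction of $B^{4n}$, so that $dK(F^*\eta) = F^*\eta - K(F^*d\eta)$. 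The correction $\tilde\delta$ is any smooth extension to $B^{4n}$ of a $(2n-1)$-form $\delta_0$ on $S^{4n-1}$ solving $d\delta_0 = K(F^*d\eta)|_{S^{4n-1}}$; such $\delta_0$ exists because $H^{2n}(S^{4n-1}) = 0$ and because the right-hand side is closed thanks to $F^*d\eta|_{S^{4n-1}} = G^*d\omega = 0$. Hodge theory on $S^{4n-1}$ supplies $\|\delta_0\|_\infty \le C\,\|K(F^*d\eta)|_{S^{4n-1}}\|_\infty$.

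Substituting and using $\int F^*(\eta \wedge \eta) = 0$, the identity collapses to
\[
H(G) \;=\; -\int_{B^{4n}} K(F^*d\eta) \wedge F^*\eta \;-\; \int_{B^{4n}} K(F^*\eta) \wedge F^*d\eta \;+\; \int_{S^{4n-1}} \delta_0 \wedge G^*\omega.
\]
Each integrand now pairs a factor controlled pointwise by $\dil_{2n}(F)$ (via $|F^*\eta|, |K(F^*\eta)| \le C\,\dil_{2n}(F)$) with one controlled by $\dil_{2n+1}(F)$ (via $|F^*d\eta|, |K(F^*d\eta)|, \|\delta_0\|_\infty \le C\,\dil_{2n+1}(F)$), using the uniform bound $\|K\mu\|_\infty \le C\|\mu\|_\infty$ on $B^{4n}$. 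The boundary contribution is only linear in $\dil_{2n+1}(F)$, which accounts for the factor $1 + \dil_{2n}(F)$ in the final estimate. The main technical obstacle is precisely this boundary correction: the raw homotopy primitive $K(F^*\eta)$ does not restrict to a primitive of $G^*\omega$ on $S^{4n-1}$, so one must invest in the $C^0$ estimate on $\delta_0$. An appealing alternative I would explore is to set up the Haj\l{}asz--Schikorra--Tyson generalized Hopf invariant directly as an integral functional on $B^{4n}$ that sidesteps primitives altogether and should yield the same bilinear estimate with a cleaner boundary treatment.
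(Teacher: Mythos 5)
Your proof is correct and takes a genuinely different (and in some respects cleaner) route than the paper's. The paper proves a more general intermediate inequality (Theorem~\ref{thm:main_inequality}), working on the homotopy cylinder $S^{4n-1}\times[0,1]$ and producing primitives with $L^\infty$ bounds via Gromov's co-isoperimetric inequality, applied several times. The resulting error has the form $\dil_{2n}(F)\dil_{2n+1}(F) + \dil_{2n+1}(F)^2 + \dil_{2n+1}(F)^{4n/(2n+1)}$; the last, fractional-exponent term comes from estimating $F^\ast(\omega \wedge \omega)$ by $\dil_{4n}(F)$. You instead work directly on $B^{4n}$ using the radial Poincar\'e homotopy operator $K$ as an explicit primitive-constructing device, pay a single boundary correction $\delta_0$ (which still requires an $L^\infty$ bound on a primitive on $S^{4n-1}$, comparable to one invocation of the paper's co-isoperimetric lemma), and---this is the decisive optimization---choose $\eta = \chi\,\pi^\ast\omega$ so that $\eta \wedge \eta = \chi^2\,\pi^\ast(\omega\wedge\omega) \equiv 0$. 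That kills the $F^\ast(\omega\wedge\omega)$ term outright, leaving a bilinear estimate $|H(G)| \lesssim (1+\dil_{2n}(F))\,\dil_{2n+1}(F)$, from which the theorem follows immediately as you indicate. Your observation about $\eta \wedge \eta = 0$ could in fact be transplanted into the paper's cylinder argument to remove the $\dil_{2n+1}^{4n/(2n+1)}$ term there as well. What the paper's cylinder formulation buys in exchange is the more general statement that $H_\omega$ is robust between \emph{any} two rank-$2n$ maps $F_0, F_1$ under a small-$(2n+1)$-dilation homotopy; your ball formulation is tailored to the nullhomotopy case, which is all Theorem~\ref{thm:main} needs. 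One minor imprecision: you attribute the $C^0$ bound $\|\delta_0\|_\infty \lesssim \|K(F^\ast d\eta)|_{S^{4n-1}}\|_\infty$ to ``Hodge theory''; strictly, Hodge theory plus elliptic regularity plus Sobolev embedding $W^{1,p}\hookrightarrow C^0$ for $p > 4n-1$ does give this, but it is worth being explicit that this is an $L^\infty\to L^\infty$ primitive bound (equivalently the co-isoperimetric inequality the paper cites from Gromov), not a bare consequence of the Hodge decomposition.
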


This result is related to the following theorem of Haj{\l}asz-Schikorra-Tyson (\cite{HST}), which can be used to show that $\piLip_{4n-1}(\HH^{2n})$ is non-trivial.

\begin{theorem}\label{thm:low_rank}
The map $i \circ G$ does not admit a Lipschitz null-homotopy $F$ with $\textup{rank}(DF) \le 2n$ almost everywhere.
\end{theorem}

In order to prove Theorem \ref{thm:low_rank}, Haj{\l}asz-Schikorra-Tyson introduced a generalized Hopf invariant of maps $G: S^{4n-1} \rightarrow \RR^k$ with $\textup{rank}(DG) \le 2n$ and showed that it was invariant under Lipschitz homotopies $F$ with $\textup{rank}(DF) \le 2n$ almost everywhere. The generalized Hopf invariant is similar to the Whitehead definition of the Hopf invariant and the proof of invariance proceeds similarly. However, a Lipschitz map of rank $\le 2n$ almost everywhere may not necessarily be approximated by a smooth one preserving the rank condition; therefore one needs to work through the serious technical details to make this argument rigorous for general Lipschitz functions.

The proof of Theorem \ref{thm:main} uses similar ideas. We will use the generalized Hopf invariant to obstruct homotopies with bounded $2n$-dilation and small $(2n+1)$-dilation. The generalized Hopf invariant is no longer an invariant, but  we show that it is robust under homotopies of small $(2n+1)$-dilation. This gives us the advantage of being able to work directly with smooth functions while also proving a similar theorem. As an aside, it may be possible to use the ideas of Theorem \ref{thm:main} to prove Theorem \ref{thm:low_rank}, but the details are not trivial. A rank $k-1$ Lipschitz map can not necessarily be approximated by maps of small $k$-dilation, instead they are approximated by maps whose $k$-dilation is small in some integral sense. Improving Theorem \ref{thm:main} to obstruct such an integral from being small, if true, would likely require a finer analysis than the tools in this paper seem to give.

It is natural to ask if we can drop the condition on $2n$-dilation, but our next result shows that this is impossible and that Theorem \ref{thm:main} is sharp. 

\begin{theorem}\label{thm:construction}
Let $G: S^m \rightarrow S^n$ be a smooth map with $m > n > \frac{m}{2}$. Then for any $\epsilon > 0$, $G$ extends to a map $F: B^{m+1} \rightarrow B^{n+1}$ with $\dil_{n+1}(F) < \epsilon$.
\end{theorem}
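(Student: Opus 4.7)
The plan is to construct $F$ by composing a baseline extension of $G$ with a highly anisotropic self-map of $B^{m+1}$ that fixes the boundary pointwise. The resulting map will have arbitrarily small $(n+1)$-dilation at the cost of a large $n$-dilation (in particular, large Lipschitz constant), which is consistent with the sharpness statement relative to Theorem~\ref{thm:main}.

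I would begin with the cone extension $F_0(r\omega) = r G(\omega)$, which is smooth, extends $G$, and has $\dil_{n+1}(F_0) \le \|J_n G\|_\infty$. For each $w \in B^{n+1} \setminus \{0\}$, the fiber $F_0^{-1}(w) = |w| \cdot G^{-1}(w/|w|)$ is an $(m-n)$-dimensional submanifold of $B^{m+1}$, and together these foliate $B^{m+1}$ away from the origin. For a large parameter $N$, I would then construct a smooth map $\Psi_N: B^{m+1} \to B^{m+1}$ such that $\Psi_N|_{\partial B^{m+1}} = \mathrm{id}$ and $\Psi_N$ preserves each fiber of $F_0$ setwise, stretching along the fiber by a factor of order $N$ while compressing in the $(n+1)$-dimensional transverse direction by a factor $\epsilon_N = N^{-(m-n)/(n+1)}$. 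The pointwise scaling $\det D\Psi_N \approx 1$ makes this anisotropy compatible with the degree-one constraint forced by the boundary condition. Setting $F = F_0 \circ \Psi_N$, the composition $DF = DF_0 \circ D\Psi_N$ has rank at most $n+1$; because the stretching direction of $\Psi_N$ lies precisely in $\ker DF_0$, every nonzero singular value of $DF$ is scaled by $\epsilon_N$. A direct calculation yields
\[
\dil_{n+1}(F) \lesssim \epsilon_N^{n+1} \cdot \|J_n G\|_\infty,
\]
which tends to zero as $N \to \infty$, so choosing $N$ large enough gives $\dil_{n+1}(F) < \epsilon$.

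The main obstacle is the construction of $\Psi_N$ itself. Since $\Psi_N|_{\partial B^{m+1}} = \mathrm{id}$ forces degree one, uniform compression of all directions is impossible, and the anisotropic split into stretching along fibers and compressing transversely is essential. Realizing this globally, smoothly across the origin where all fibers of $F_0$ degenerate to a point, and consistently with the identity boundary condition is the delicate technical step, and likely requires a cutoff or gluing near the boundary whose contribution to $\dil_{n+1}(F)$ must be estimated separately. The hypothesis $n > m/2$ enters precisely here: it is equivalent to $m - n < n + 1$, so there are strictly more transverse directions than fiber directions, which is what allows the compression factor $\epsilon_N$ to tend to zero while the determinant of $D\Psi_N$ stays of order one.
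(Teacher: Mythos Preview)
Your plan has a genuine gap at the point you yourself flag as ``the main obstacle,'' and in fact the obstacle is fatal for the scheme as written.

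First, taken literally, ``$\Psi_N$ preserves each fiber of $F_0$ setwise'' means $F_0\circ\Psi_N=F_0$, so $F=F_0$ and nothing is gained. The intended reading is presumably that $\Psi_N$ carries fibers to (other) fibers, so that there is an induced base map $\psi_N:B^{n+1}\to B^{n+1}$ with $F=F_0\circ\Psi_N=\psi_N\circ F_0$. But then, since $\Psi_N|_{S^m}=\Id$ forces $F|_{S^m}=G$, and $G$ is onto $S^n$, we must have $\psi_N|_{S^n}=\Id$. Hence $\psi_N$ has degree one and $\int_{B^{n+1}}\det D\psi_N=\textup{Vol}(B^{n+1})$, so $\sup|\det D\psi_N|\ge 1$. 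Since $\dil_{n+1}(\psi_N\circ F_0)(p)=|\det D\psi_N(F_0(p))|\cdot\dil_{n+1}(F_0)(p)$, the map $F$ has $(n+1)$-dilation bounded below by $\dil_{n+1}(F_0)$ at preimages of the points where $|\det D\psi_N|\ge 1$. Your ``compression by $\epsilon_N$ in the transverse direction'' is exactly the statement $|\det D\psi_N|\sim\epsilon_N^{n+1}$ everywhere, and this is ruled out by the degree constraint --- not by the determinant of $D\Psi_N$ on $B^{m+1}$, which is the only place your argument invokes $n>m/2$. Even dropping the fiber-preserving structure, the boundary condition $\Psi_N|_{S^m}=\Id$ forces $D\Psi_N|_{T_pS^m}=\Id$ at every $p\in S^m$, so the anisotropy you need collapses on the boundary and $\dil_{n+1}(F)$ there equals $\dil_{n+1}(F_0)>0$; the ``cutoff near the boundary'' cannot repair this.

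The paper's proof avoids this obstruction by working on \emph{both} sides: a target-side map $\Psi:B^{n+1}\to B^{n+1}$ (identity on $S^n$) retracts the complement of a fine net of small balls onto an $n$-complex, so $\dil_{n+1}$ of the composition vanishes on most of the domain; on the small remaining preimage one uses Guth's quantitative embedding lemma to build a source-side diffeomorphism $\Phi$ (identity on $S^m$) with small Lipschitz constant there. The hypothesis $n>m/2$ enters through that embedding lemma, not through a Jacobian balance. A purely source-side precomposition cannot succeed for the degree reason above; you need the target-side retraction to an $n$-skeleton.
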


Extending the embedding $i: S^{2n} \rightarrow \RR^k$ to a ball and applying Theorem \ref{thm:construction} immediately gives the following corollary.

\begin{corollary}\label{cor:construction}
$i \circ G$ admits null-homotopies $F: B^{4n} \rightarrow \RR^k$ with arbitrarily small $(2n+1)$-dilation.
\end{corollary}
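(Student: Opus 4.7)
The plan is to reduce Corollary \ref{cor:construction} directly to Theorem \ref{thm:construction} by extending the embedding $i$ smoothly over $B^{2n+1}$ and then composing with the low-dilation null-homotopy that the theorem supplies. First I would extend $i: S^{2n} \to \RR^k$ to a smooth map $\tilde{i}: B^{2n+1} \to \RR^k$; this is always possible (extend $i$ into a collar neighborhood of $S^{2n}$ in $B^{2n+1}$ using a smooth product structure, multiply by a bump function, and define the result arbitrarily further inside). Compactness of $B^{2n+1}$ gives a finite Lipschitz constant $L := \Lip(\tilde{i})$.

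Next I would apply Theorem \ref{thm:construction} to $G$ viewed as a map $S^{4n-1} \to S^{2n}$. The required inequality $m > n > m/2$ of that theorem becomes $4n-1 > 2n > (4n-1)/2 = 2n - \tfrac{1}{2}$, which holds for every $n \ge 1$. So for any $\delta > 0$ the theorem produces a smooth extension $\tilde{F}: B^{4n} \to B^{2n+1}$ of $G$ with $\dil_{2n+1}(\tilde{F}) < \delta$.

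Finally, I would set $F := \tilde{i} \circ \tilde{F}: B^{4n} \to \RR^k$. Since $\tilde{F}|_{S^{4n-1}} = G$ takes values in $S^{2n} \subset B^{2n+1}$, where $\tilde{i}|_{S^{2n}} = i$, we get $F|_{S^{4n-1}} = i \circ G$, so $F$ is a genuine null-homotopy of $i \circ G$. The chain-rule estimate $\dil_{2n+1}(\tilde{i} \circ \tilde{F}) \le \Lip(\tilde{i})^{2n+1}\,\dil_{2n+1}(\tilde{F})$ — immediate from the fact that $D\tilde{i}$ stretches each edge of a $(2n+1)$-parallelepiped by at most $L$, hence $(2n+1)$-volume by at most $L^{2n+1}$ — then gives $\dil_{2n+1}(F) \le L^{2n+1}\delta$, so choosing $\delta < \epsilon L^{-(2n+1)}$ makes $\dil_{2n+1}(F) < \epsilon$.

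There is no substantive obstacle: the corollary is essentially the content of Theorem \ref{thm:construction} pushed forward through the Lipschitz map $\tilde{i}$, and all the real work is done by Theorem \ref{thm:construction} itself. The only thing to double-check is the behavior of $k$-dilation under composition with a Lipschitz map, which is a standard consequence of the definition.
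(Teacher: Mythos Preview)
Your proposal is correct and follows exactly the approach the paper indicates: extend $i$ over $B^{2n+1}$, apply Theorem~\ref{thm:construction}, and compose. The paper states this in one line without spelling out the dimension check or the composition estimate for $k$-dilation, but your added details are accurate and the argument is the same.
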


To prove Theorem \ref{thm:construction}, we will utilize Guth's h-principle for k-dilation proved in \cite{Gu}. Guth showed that maps $S^m \rightarrow S^n$ are homotopic to maps of arbitrarily small $k$-dilation for $k > \frac{m+1}{2}$. Although this theorem doesn't apply directly, we will adapt the construction to create extensions. 

Theorem \ref{thm:construction} is similar to the previously mentioned result of Wenger-Young, who showed in \cite{WY} that if all of the Hopf-Hilton invariants of $G: S^m \rightarrow S^n$ vanish, then $G$ admits a Lipschitz extension $B^{m+1} \rightarrow B^{n+1}$ which has rank $\le n$ almost everywhere, i..e $\dil_{n+1} = 0$. This extension was upgraded to $C^1$ in \cite{GHP}. In particular, if $G$ is homotopic to a suspension, then $G$ admits a rank $n$ extension to the ball. By the Freudenthal Suspension Theorem, when $n+1 \le m < 2n-1$ all $G$ are homotopic to suspensions, so Theorem \ref{thm:construction} gives a new answer for maps $S^{2n-1} \rightarrow S^n$.

 \subsection*{Acknowledgements}
 I would like to thank Larry Guth and Robin Elliott for helpful conversations about this project. This work was supported by the National Science Foundation Graduate Research Fellowship under grant number 1745302.

\section{Preliminaries}\label{sec:background}
\subsection{k-dilation}\label{subsec:k-dil}

In this section we will collect a few basic facts about $k$-dilation that we use throughout this paper. For proofs of these facts and a broader discussion of $k$-dilation in topology, we refer the reader to \cite{Gu}. 

Let $F: M \rightarrow N$ be a $C^1$ map between Riemannian manifolds. The $k$-dilation $\dil_k(F)$ is defined by

\[ \dil_k(F) = \sup_{\Sigma^k \subset M} \frac{\textup{Vol}_k(F(\Sigma^k))}{\textup{Vol}_k(\Sigma^k)}. \]

Here $\Sigma_k$ ranges over $k$-dimensional submanifolds of $M$. In particular, if $F(M)$ lies in a $k-1$-dimensional subset of $N$, then $\dil_k(F) = 0$. $k$-dilation can also be computed infinitesimally from the singular values $s_1(x) \ge s_2(x) \ge \cdots$ of $DF_x: T_xM \rightarrow T_{f(x)}N$ by

\[ \dil_k(F) = \sup_{x \in M} \prod_{i=1}^k s_i(x). \]

The latter perspective shows that if $dF$ has rank $\le k-1$, then $\dil_{k}(F) = 0$. This definition also allows us to define a local notion of the $k$-dilation, by the function

\[ \dil_k(F)(x) = \prod_{i=1}^k s_i(x). \]

In addition, we can easily relate the $k$-dilation for different values of $k$.

\begin{prop}\label{prop:k-relation}
If $k > j$, the $\dil_j(F)^{1/j} \ge \dil_k(F)^{1/k}$
\end{prop}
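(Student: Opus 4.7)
The plan is to prove the inequality pointwise using the singular value characterization recalled in the paragraph preceding the proposition, and then take the supremum. At a fixed point $x \in M$, let $s_1(x) \ge s_2(x) \ge \cdots$ denote the singular values of $DF_x$. The quantity $\bigl(\prod_{i=1}^k s_i(x)\bigr)^{1/k}$ is the geometric mean of the top $k$ singular values, and similarly for $j$, so the claim reduces to a statement about geometric means of a decreasing sequence.

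Concretely, I would set $A = \prod_{i=1}^j s_i(x)$ and $B = \prod_{i=j+1}^k s_i(x)$, so that $\prod_{i=1}^k s_i(x) = AB$. Since $s_1(x) \ge \cdots \ge s_j(x)$, the geometric mean satisfies $A^{1/j} \ge s_j(x)$; since $s_{j+1}(x) \ge \cdots \ge s_k(x)$ and each is $\le s_j(x)$, we have $B^{1/(k-j)} \le s_j(x)$. Combining, $B^{1/(k-j)} \le A^{1/j}$, which after raising to the power $k-j$ and multiplying by $A$ gives $AB \le A^{k/j}$, i.e.
\[
\Bigl(\prod_{i=1}^k s_i(x)\Bigr)^{1/k} \le \Bigl(\prod_{i=1}^j s_i(x)\Bigr)^{1/j}.
\]

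Taking the supremum over $x \in M$ on both sides and invoking the identity $\dil_k(F) = \sup_x \prod_{i=1}^k s_i(x)$ yields $\dil_k(F)^{1/k} \le \dil_j(F)^{1/j}$, as desired.

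There is no real obstacle here: the only subtlety is making sure the geometric-mean comparison uses the ordering $s_1 \ge s_2 \ge \cdots$ correctly, so that the ``tail'' factors $s_{j+1}(x), \ldots, s_k(x)$ are bounded above by the geometric mean of the ``head'' factors. Once that is in hand, the inequality is essentially an application of AM-GM-style reasoning on a monotone sequence, and the passage to suprema is immediate.
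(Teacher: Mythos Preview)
Your argument is correct: the pointwise comparison of geometric means of a nonincreasing sequence, followed by taking the supremum, is exactly the standard proof. The paper itself does not prove this proposition; it is listed among the ``basic facts about $k$-dilation'' in Section~\ref{subsec:k-dil} with the reader referred to \cite{Gu} for proofs, so there is nothing in the paper to compare against.
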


We use this proposition several times in the paper, but of particular interest to us is the case of $j = 1$. $\dil_1(F) = \Lip(F)$, and so we get the inequality $\Lip(F)^k \ge \dil_k(F)$. In particular, if a map has small Lipschitz constant, it will also have small $k$-dilation.

To prove Theorem \ref{thm:main}, we will take a dual perspective on  $k$-dilation, working with differential forms instead of submanifolds. 

\begin{prop}\label{prop:forms}
Let $\omega \in \Omega^k(N)$ be a differential form, then

\[ ||F^\ast \omega||_{L^\infty(\Omega^k(M))} \le \dil_k(F)||\omega||_{L^\infty(\Omega^k(N))}. \]
\end{prop}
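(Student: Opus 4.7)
The plan is to reduce the statement to a pointwise estimate and then invoke the singular-value formula for $\dil_k$ recorded earlier in the excerpt. Fix a point $x \in M$ and orthonormal tangent vectors $v_1, \dots, v_k \in T_xM$. By the definition of the pullback,
\[
(F^\ast \omega)_x(v_1, \dots, v_k) = \omega_{F(x)}\bigl(DF_x(v_1), \dots, DF_x(v_k)\bigr).
\]
Writing $\|\omega_{F(x)}\|$ for the comass of $\omega$ at $F(x)$ (the supremum of $|\omega_{F(x)}(u_1, \dots, u_k)|$ over orthonormal $k$-tuples in $T_{F(x)}N$, equivalently over simple unit $k$-vectors), multilinearity and homogeneity give
\[
\bigl|(F^\ast \omega)_x(v_1, \dots, v_k)\bigr| \le \|\omega_{F(x)}\| \cdot \bigl|DF_x(v_1) \wedge \cdots \wedge DF_x(v_k)\bigr|.
\]
The $L^\infty$ norm $\|\omega\|_{L^\infty}$ is exactly $\sup_y \|\omega_y\|$, so the first factor is controlled; the task becomes bounding the norm of the image $k$-vector by $\dil_k(F)$.

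Next I would estimate $\bigl|DF_x(v_1) \wedge \cdots \wedge DF_x(v_k)\bigr|$ by the top $k$ singular values of $DF_x$. The squared norm of this wedge equals the Gram determinant $\det\bigl(\langle DF_x(v_i), DF_x(v_j)\rangle\bigr)_{i,j=1}^k$, which is a principal $k \times k$ minor of the symmetric matrix $(DF_x)^\top DF_x$ expressed in the orthonormal basis $\{v_1, \dots, v_k\}$ (extended arbitrarily to an orthonormal basis of $T_xM$). The eigenvalues of $(DF_x)^\top DF_x$ are $s_1(x)^2 \ge s_2(x)^2 \ge \cdots$, so by the Cauchy interlacing / Poincaré separation inequalities applied to principal submatrices, this Gram determinant is bounded above by $\prod_{i=1}^k s_i(x)^2$. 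Taking square roots,
\[
\bigl|DF_x(v_1) \wedge \cdots \wedge DF_x(v_k)\bigr| \le \prod_{i=1}^k s_i(x) \le \dil_k(F),
\]
where the last inequality is the infinitesimal formula for $\dil_k$ from Section \ref{subsec:k-dil}.

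Combining these two steps yields $|(F^\ast \omega)_x(v_1, \dots, v_k)| \le \dil_k(F)\,\|\omega\|_{L^\infty}$ for every $x$ and every orthonormal $k$-frame at $x$; taking the supremum produces the claimed bound. The only nontrivial step is the Gram-determinant inequality, which I expect to be the main obstacle but is classical; an alternative route that avoids interlacing is to diagonalize $DF_x$ via its singular value decomposition and apply the Cauchy--Binet formula directly, which expresses the supremum of the wedge norm over orthonormal $k$-frames as exactly $s_1(x)\cdots s_k(x)$, reducing the inequality to a bookkeeping check.
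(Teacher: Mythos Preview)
Your argument is correct. The paper does not actually supply a proof of this proposition: it is listed among the ``basic facts about $k$-dilation'' in Section~\ref{subsec:k-dil}, with the reader referred to \cite{Gu} for proofs, so there is no in-paper argument to compare against. Your pointwise reduction via the comass norm, combined with the Gram-determinant bound $\bigl|DF_x(v_1)\wedge\cdots\wedge DF_x(v_k)\bigr|\le\prod_{i=1}^k s_i(x)$ from Poincar\'e separation (or, equivalently, Cauchy--Binet after an SVD), is exactly the standard way this inequality is established and matches the infinitesimal formula for $\dil_k$ quoted in the paper.
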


Proposition \ref{prop:forms} is very useful for establishing obstructions to small $k$-dilation in special cases. Many rational homotopy invariants can be computed using the pullback of differential forms, and Proposition \ref{prop:forms} allows one to bound these invariants using $k$-dilation.

\subsection{Generalized Hopf Invariant}\label{subsec:gen_hopf}

In \cite{HST}, Haj{\l}asz-Shikorra-Tyson introduced the generalized Hopf invariant of a rank $2n$-map $F: S^{4n-1} \rightarrow \RR^k$. By rank $2n$ we mean that $\textup{rank}(DF) \le 2n$. In this section we will give the definition and discuss some properties and results about the generalized Hopf invariant. 

Let $\omega \in \Omega^{2n}(\RR^k)$ be a $2n$-form. Then $F^\ast \omega$ is a closed form on $S^{4n-1}$. This is because $dF^\ast \omega = F^\ast d\omega = 0$, where the last fact follows since $F$ has rank $2n$ and $d\omega$ is a $(2n+1)$-form. Thus there is a form $\alpha$ on $S^{4n-1}$ so that $d\alpha = F^\ast \omega$, and we can define the generalized Hopf invariant $H_\omega(F)$ by

\[ H_\omega(F) = \int_{S^{4n-1}} \alpha \wedge F^\ast \omega .\]

\begin{prop}\label{prop:independence}
$H_\omega(F)$ is independent of the choice of primitive $\alpha$.
\end{prop}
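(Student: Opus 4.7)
The plan is to show that if $\alpha$ and $\alpha'$ are two primitives of $F^{\ast}\omega$ on $S^{4n-1}$, then the difference of the two integrals $\int \alpha\wedge F^{\ast}\omega - \int \alpha'\wedge F^{\ast}\omega$ vanishes. The difference $\eta = \alpha - \alpha'$ is a closed $(2n-1)$-form, so the whole computation reduces to showing that $\int_{S^{4n-1}} \eta \wedge F^{\ast}\omega = 0$ whenever $\eta$ is closed and $F^{\ast}\omega$ is closed.

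The key input is cohomological: since $1 \le 2n-1 < 4n-1$ for every $n \ge 1$, we have $H^{2n-1}_{\mathrm{dR}}(S^{4n-1}) = 0$, so the closed form $\eta$ is exact, $\eta = d\beta$ for some $(2n-2)$-form $\beta$ on $S^{4n-1}$. Then I would apply the Leibniz rule $d(\beta \wedge F^{\ast}\omega) = d\beta \wedge F^{\ast}\omega + (-1)^{2n-2} \beta \wedge dF^{\ast}\omega$ and use that $F^{\ast}\omega$ is closed (this is exactly the rank condition explained in the paragraph preceding the proposition, $d F^{\ast}\omega = F^{\ast} d\omega = 0$) to reduce to
\[ \int_{S^{4n-1}} \eta \wedge F^{\ast}\omega \;=\; \int_{S^{4n-1}} d(\beta \wedge F^{\ast}\omega). \]
Finally, Stokes' theorem on the closed manifold $S^{4n-1}$ makes this integral vanish.

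There is essentially no obstacle here; the only thing to double-check is that $2n-1$ is neither $0$ nor $4n-1$ so that the middle de Rham cohomology of the sphere is indeed trivial, which holds for all $n \ge 1$. If one wished to avoid invoking $H^{2n-1}(S^{4n-1}) = 0$, an equivalent route is to work on all of $B^{4n}$ via a smooth extension and apply Stokes there, but the cohomological argument above is the cleanest.
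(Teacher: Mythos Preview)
Your argument is correct and is essentially the same as the paper's: both use that $\alpha-\alpha'$ is closed, invoke the vanishing of $H^{2n-1}_{\mathrm{dR}}(S^{4n-1})$ to write it as $d\beta$ (the paper does this implicitly), and then apply Leibniz plus Stokes on the closed sphere. If anything, your write-up is slightly more explicit about where the rank hypothesis $dF^\ast\omega=0$ enters and about why the relevant cohomology group vanishes.
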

\begin{proof}
If $\beta$ is another primitive of $F^\ast \omega$, then $d(\alpha - \beta) = 0$. Therefore we can find $\gamma$ so that $d\gamma = (\alpha-\beta)$. But then we have that $d(\gamma \wedge F^\ast \omega) = \alpha \wedge F^\ast \omega - \beta \wedge F^\ast \omega$, so Stokes' Theorem gives the result.
\end{proof}

We can now consider the case that $F$ factors through an embedding $i: S^{2n} \rightarrow \RR^k$ via a map $G: S^{4n-1} \rightarrow S^{2n}$, and $\int_{S^{2n}} i^\ast \omega = 1$ Then the definition of $H_\omega$ coincides with the Whitehead definition of the Hopf invariant (\cite{BT}) and we see that $\textup{Hopf}(G) = H_\omega(i \circ G)$.

Lastly we state the main result of \cite{HST}, and sketch the proof ignoring the technical details needed to make the argument work for general Lipschitz homotopies.

\begin{theorem}\label{thm:HST_Main}
Let $F$ be a Lipschitz homotopy between rank $2n$ maps $F_0$ and $F_1$ such that $\textup{rank}(DF) \le 2n$ almost everywhere. Then $H_\omega(F_0) = H_\omega(F_1)$
\end{theorem}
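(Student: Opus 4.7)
The plan is to carry out on the cylinder $S^{4n-1}\times [0,1]$ exactly the Stokes-theorem argument used to verify that the Whitehead definition of the Hopf invariant is a homotopy invariant, where the crucial input is the rank condition rather than the Eilenberg--MacLane structure. Write $F: S^{4n-1}\times [0,1] \to \RR^k$ for the homotopy, so that $F|_{S^{4n-1}\times\{0\}} = F_0$ and $F|_{S^{4n-1}\times\{1\}} = F_1$.

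First I would observe that $F^\ast \omega$ is a closed $2n$-form on the cylinder: formally $d(F^\ast \omega) = F^\ast(d\omega)$, and because $\text{rank}(DF)\le 2n$ almost everywhere, $F^\ast$ of any form of degree strictly greater than $2n$ vanishes a.e., so $F^\ast(d\omega) = 0$. Next, since $H^{2n}(S^{4n-1}\times[0,1]) = H^{2n}(S^{4n-1}) = 0$ (as $2n \ne 0, 4n-1$ for $n\ge 1$), the closed form $F^\ast\omega$ admits a primitive $\tilde\alpha$ on the cylinder, i.e.\ $d\tilde\alpha = F^\ast \omega$. Setting $\alpha_i := \tilde\alpha|_{S^{4n-1}\times\{i\}}$ for $i=0,1$, we have $d\alpha_i = F_i^\ast \omega$, so each $\alpha_i$ is a valid primitive in the definition of $H_\omega(F_i)$; by Proposition \ref{prop:independence} we may use it to compute the Hopf invariants.

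Now I would apply Stokes' theorem to the $(4n-1)$-form $\tilde\alpha \wedge F^\ast\omega$ on $S^{4n-1}\times[0,1]$:
\[
\int_{S^{4n-1}\times\{1\}} \alpha_1 \wedge F_1^\ast\omega \;-\; \int_{S^{4n-1}\times\{0\}} \alpha_0 \wedge F_0^\ast\omega \;=\; \int_{S^{4n-1}\times[0,1]} d(\tilde\alpha \wedge F^\ast \omega) \;=\; \int_{S^{4n-1}\times [0,1]} F^\ast\omega \wedge F^\ast\omega.
\]
The right-hand side equals $\int F^\ast(\omega\wedge\omega)$, and $\omega\wedge\omega$ is a $4n$-form on $\RR^k$; since $\text{rank}(DF) \le 2n$ a.e., this pullback vanishes pointwise a.e. Therefore the right-hand side is zero, which yields $H_\omega(F_1) = H_\omega(F_0)$.

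The hard part will be making the computations of $F^\ast\omega$, $\tilde\alpha$, and their wedge products rigorous for a merely Lipschitz homotopy $F$: Rademacher's theorem gives a pointwise derivative almost everywhere, but existence of a genuine $L^\infty$ differential form $\tilde\alpha$ with $d\tilde\alpha = F^\ast\omega$ (in the distributional sense) requires care, as does the validity of Stokes' theorem in this low-regularity setting. One cannot in general resolve this by smoothing $F$ while preserving the rank-$\le 2n$ condition; instead one must construct $\tilde\alpha$ by a Hodge-theoretic or convolution-based argument, verify the product and Leibniz rules for wedge products of flat/Sobolev forms, and justify the integration-by-parts formula directly. Once these technicalities are handled (this is where the bulk of \cite{HST} is spent), the conceptual calculation above delivers the invariance.
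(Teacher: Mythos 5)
Your argument is the same Stokes-theorem computation on the cylinder that the paper gives as its proof sketch: pull back $\omega$, use the rank condition to see $F^\ast d\omega = 0$ and $F^\ast(\omega\wedge\omega)=0$, take a primitive on $S^{4n-1}\times[0,1]$ (exactness following from $H^{2n}(S^{4n-1}\times I)=0$), and apply Stokes. You also correctly identify, as the paper does, that the real work in \cite{HST} is justifying these manipulations for a merely Lipschitz $F$, so the proposal matches the paper's treatment.
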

\begin{proof}[Proof Sketch]
We will give the proof pretending that $F$ is smooth. In this case, $F^\ast \omega$ is a closed $2n$-form on $S^{4n-1} \times I$, hence exact and we can find $\alpha$ on $S^{4n-1} \times I$ so that $d\alpha = F^\ast \omega$. Notice that $F^\ast \omega|_{S^{4n-1} \times \{i\}} = F_i^\ast \omega$, and so by Stokes' Theorem.

\[ H_\omega(F_1) - H_\omega(F_0) = \int_{S^{4n-1} \times I} d(\alpha \wedge F^\ast \omega). \]

where $d(\alpha \wedge F^\ast \omega) = F^\ast (\omega \wedge \omega) - \alpha \wedge F^\ast d\omega $. But since $F$ has rank $2n$, both $F^\ast d\omega = 0$ and $F^\ast (\omega \wedge \omega) = 0$
\end{proof}

\section{Proof of Theorem \ref{thm:main}}\label{sec:main_proof}

In this section we prove Theorem \ref{thm:main}, which will follow from a more technical result about the generalized Hopf invariant.

\begin{theorem}\label{thm:main_inequality}
Let $F_0,F_1: S^{4n-1} \rightarrow \RR^k$ be smooth maps of rank $2n$, and $\omega \in \Omega^{2n}(\RR^k)$ be a $2n$-form, with $||\omega||_{L^\infty},||d\omega||_{L^\infty} \le c$. Let $F$ be a smooth homotopy between $F_0$ and $F_1$. Then there is a constant $c'$ depending only on $n$ and $c$ such that

\[ |H_\omega(F_0) - H_\omega(F_1)| \le c'(\dil_{2n}(F)\dil_{2n+1}(F) + \dil_{2n+1}(F)^2+\dil_{2n+1}(F)^{\frac{4n}{2n+1}}). \]

Recall that $H_\omega$ is the generalized Hopf invariant discussed in Section \ref{subsec:gen_hopf}.

\end{theorem}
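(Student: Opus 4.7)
The plan is to imitate the proof sketch of Theorem~\ref{thm:HST_Main}, carefully tracking the errors that arise once $F^*d\omega$ and $F^*(\omega\wedge\omega)$ are no longer identically zero. The central identity is that for any smooth $(2n-1)$-form $\tilde\alpha$ on $S^{4n-1}\times I$ whose boundary restrictions $\iota_i^*\tilde\alpha$ are primitives of the closed forms $F_i^*\omega$ on $S^{4n-1}$, Stokes' theorem together with $d(\tilde\alpha\wedge d\tilde\alpha) = d\tilde\alpha\wedge d\tilde\alpha$ yields the exact equality
\[ H_\omega(F_1)-H_\omega(F_0) = \int_{S^{4n-1}\times I} d\tilde\alpha \wedge d\tilde\alpha. \]
If I can choose $\tilde\alpha$ so that $d\tilde\alpha = F^*\omega + \Delta$ with $\Delta$ small in $L^2$, then expanding the square produces exactly the three terms in the theorem.

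To construct such a $\tilde\alpha$, I would first apply a bounded Poincar\'e lemma on $S^{4n-1}$ (valid since $H^{2n}(S^{4n-1})=0$ for $n\ge 1$) to obtain primitives $\alpha_i$ of $F_i^*\omega$ with $\|\alpha_i\|_{L^2}\lesssim \|F_i^*\omega\|_{L^2} \lesssim \dil_{2n}(F)$. Next I would extend them to the cylinder using the standard chain-homotopy operator $h$ on $S^{4n-1}\times I$ defined by $h(\eta_1 + dt\wedge\eta_2)(x,t) = \int_0^t \eta_2(x,s)\,ds$, which satisfies $dh+hd = \mathrm{Id}-\pi^*\iota_0^*$. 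Setting $\hat\alpha := \pi^*\alpha_0 + h(F^*\omega)$ gives $\iota_0^*\hat\alpha=\alpha_0$ and $d\hat\alpha = F^*\omega - h(F^*d\omega)$, but the restriction $\iota_1^*\hat\alpha$ primitivizes $F_1^*\omega$ only up to the defect $\iota_1^*h(F^*d\omega)$. This defect is a closed $2n$-form on $S^{4n-1}$ of $L^\infty$-norm at most $c\,\dil_{2n+1}(F)$, so a second application of bounded Poincar\'e produces $\sigma$ with $d\sigma = \iota_1^*h(F^*d\omega)$ and $\|\sigma\|_{L^2}\lesssim \dil_{2n+1}(F)$. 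Defining $\tilde\alpha := \hat\alpha + t\,\pi^*\sigma$ then ensures $\iota_i^*\tilde\alpha$ is a primitive of $F_i^*\omega$ for both $i$, while a direct computation gives $d\tilde\alpha = F^*\omega + \Delta$ with $\Delta = -h(F^*d\omega) + dt\wedge\pi^*\sigma + t\,\pi^*\iota_1^*h(F^*d\omega)$ and $\|\Delta\|_{L^2} \lesssim \dil_{2n+1}(F)$.

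Substituting into the main identity and expanding yields
\[ H_\omega(F_1) - H_\omega(F_0) = \int F^*(\omega\wedge\omega) + 2\int F^*\omega\wedge\Delta + \int\Delta\wedge\Delta, \]
which I would bound term by term. The first integrand is a $4n$-form on a $4n$-manifold, so Proposition~\ref{prop:forms} gives $\|F^*(\omega\wedge\omega)\|_{L^\infty}\le c^2\dil_{4n}(F)$, and Proposition~\ref{prop:k-relation} converts this to $c^2\dil_{2n+1}(F)^{4n/(2n+1)}$. The cross term is handled by Cauchy-Schwarz as $\|F^*\omega\|_{L^2}\|\Delta\|_{L^2}\lesssim \dil_{2n}(F)\dil_{2n+1}(F)$, using $\|F^*\omega\|_{L^2}\lesssim \dil_{2n}(F)$ from Proposition~\ref{prop:forms}. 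The square term is bounded by $\|\Delta\|_{L^2}^2\lesssim \dil_{2n+1}(F)^2$. Summing the three estimates gives the claimed inequality.

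The hard part will be the primitive construction: since $F^*\omega$ is not closed on $S^{4n-1}\times I$ it admits no global primitive there, yet the Stokes identity requires $\tilde\alpha$ to have \emph{prescribed} boundary values equal to primitives of $F_i^*\omega$. The two-stage construction above—first building a cylindrical near-primitive with correct $t=0$ boundary via $h$, then correcting the $t=1$ boundary with $\sigma$—decouples the boundary condition from the bulk equation $d\tilde\alpha\approx F^*\omega$ at the cost of the $O(\dil_{2n+1}(F))$ error $\Delta$ that the theorem's three-term estimate is designed to absorb.
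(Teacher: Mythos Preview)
Your argument is correct and yields exactly the three-term estimate; it differs from the paper's proof mainly in how the primitive on the cylinder is built and how the boundary terms are handled. The paper uses the co-isoperimetric inequality twice on $S^{4n-1}\times I$: first to solve $d\alpha=F^*d\omega$, then to solve $d\beta=F^*\omega-\alpha$, and applies Stokes to $\beta\wedge F^*\omega$. Because this $\beta$ does \emph{not} restrict to a primitive of $F_i^*\omega$ on the boundary slices, the paper then introduces further auxiliary forms $\tilde\beta,\gamma,\eta$ on $S^{4n-1}$ to bound the discrepancy between $\int_{S^{4n-1}\times\{i\}}\beta\wedge F^*\omega$ and $H_\omega(F_i)$, picking up the $\dil_{2n}\dil_{2n+1}$ term there; the remaining terms come from expanding $d(\beta\wedge F^*\omega)$. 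Your route instead builds $\tilde\alpha$ so that the boundary restrictions are \emph{exactly} primitives of $F_i^*\omega$, which makes $H_\omega(F_1)-H_\omega(F_0)=\int d\tilde\alpha\wedge d\tilde\alpha$ an exact identity with no boundary error; all three terms then fall out of the single quadratic expansion $(F^*\omega+\Delta)\wedge(F^*\omega+\Delta)$. The trade-off is that you need the explicit chain-homotopy $h$ and the boundary correction $\sigma$, whereas the paper only invokes the co-isoperimetric inequality as a black box. Your packaging is a bit cleaner (fewer auxiliary forms, and the three estimate terms correspond transparently to $F^*\omega\wedge F^*\omega$, $F^*\omega\wedge\Delta$, $\Delta\wedge\Delta$), but the analytic content is the same. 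One small remark: the bound you record on $\alpha_0$ is never actually used, since only $d\tilde\alpha$ enters the final identity.
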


In other words, Theorem \ref{thm:main_inequality} tells us that the generalized Hopf invariant is robust under homotopies of small $(2n+1)$-dilation, even though it is not an invariant for general homotopies. In particular, if $F$ is $L$-Lipschitz, then $\dil_{2n}(F) \le L^{2n}$, and we obtain an inequality of the form

\[ |H_\omega(F_0) - H_\omega(F_1)| \lsim \dil_{2n+1}(F)+\dil_{2n+1}(F)^2. \]

Our strategy is to mimic the sketch we gave of Theorem \ref{thm:HST_Main}. The proof relies on the form $F^\ast \omega$ being closed. In our case, it is no longer closed but instead is almost closed in a precise sense. We will then use the co-isoperimetric inequality to show that $F^\ast \omega$ is almost exact. Using these estimates, we will bound the failure of Theorem \ref{thm:HST_Main} in terms of $\dil_{2n}(F)$ and $\dil_{2n+1}(F)$, which will give the result.

\begin{proof}[Proof of Theorem \ref{thm:main_inequality}]
In the proof we will use the notation $A \lsim B$ to mean there is  a constant $c'$ depending on $n$ and $c$ so that $A \le c'B$. We will use the following lemma repeatedly.

\begin{prop}[Co-Isoperimetric Inequality (\cite{Gr})]\label{lem:coiso}
Let $M = S^{4n-1},S^{4n-1} \times I$ and $\alpha \in \Omega^\ast(M)$ be an exact form. Then $\alpha = d\beta$ with  $||\beta||_{L^\infty} \lsim ||\alpha||_{L^\infty}$
\end{prop}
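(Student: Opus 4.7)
The plan is to handle the closed manifold case $M = S^{4n-1}$ via Hodge theory combined with integral-kernel estimates, and then bootstrap to $M = S^{4n-1} \times I$ through an explicit product-type homotopy operator.

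For $M = S^{4n-1}$, let $\alpha \in \Omega^k(M)$ be exact with $k \geq 1$. The Hodge decomposition on a closed Riemannian manifold writes $\alpha = d(d^*G\alpha) + d^*(dG\alpha) + h$, where $G$ is the Green's operator inverting the Hodge Laplacian $\Delta = dd^* + d^*d$ on the $L^2$-orthocomplement of harmonic forms and $h$ is the harmonic projection. Exactness of $\alpha$ together with uniqueness of the Hodge decomposition forces $d^*(dG\alpha) + h = 0$, so $\alpha = d\beta$ with the canonical primitive $\beta := d^*G\alpha$. To upgrade this to an $L^\infty$ bound I would invoke the standard parametrix calculus: on a closed Riemannian $N$-manifold with $N \geq 3$, $G$ is a pseudodifferential operator of order $-2$ whose Schwartz kernel $K_G(x,y)$ has diagonal singularity of order $d(x,y)^{-(N-2)}$, and applying the first-order operator $d^*$ in the $x$-variable yields a kernel $K_{d^*G}(x,y)$ of order $d(x,y)^{-(N-1)}$. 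Integrating against $\alpha$ gives
\[
|\beta(x)| \lsim \int_M \frac{|\alpha(y)|}{d(x,y)^{N-1}}\,dy \lsim \|\alpha\|_{L^\infty}\int_M \frac{dy}{d(x,y)^{N-1}},
\]
and in geodesic polar coordinates about $x$ the last integral is controlled by $\int_0^{\operatorname{diam}(M)} r^{-(N-1)}\cdot r^{N-1}\,dr < \infty$ uniformly in $x$. Setting $N = 4n-1$ yields the estimate on $S^{4n-1}$.

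For $M = S^{4n-1} \times I$, I would bootstrap using the classical integration-in-$t$ homotopy operator. Write $\alpha = \alpha_1 + dt \wedge \alpha_2$ with $\iota_{\partial_t}\alpha_i = 0$, and define $(K\alpha)(x,t) := \int_0^t \alpha_2(x,s)\,ds$. This satisfies the chain-homotopy identity $dK + Kd = \operatorname{id} - r^*$, where $r(x,t) := (x,0)$, together with the trivial bound $\|K\alpha\|_{L^\infty} \leq \|\alpha\|_{L^\infty}$. If $\alpha$ is exact on $S^{4n-1} \times I$, then $i_0^*\alpha$ is exact on $S^{4n-1}$, so the closed case supplies a primitive $\tilde\beta$ on $S^{4n-1}$ with $\|\tilde\beta\|_{L^\infty} \lsim \|i_0^*\alpha\|_{L^\infty} \leq \|\alpha\|_{L^\infty}$. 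Setting $\beta := K\alpha + r^*\tilde\beta$ and using $d\alpha = 0$, one obtains $d\beta = (\alpha - r^*\alpha) + r^*d\tilde\beta = \alpha$ together with $\|\beta\|_{L^\infty} \lsim \|\alpha\|_{L^\infty}$, as required.

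The main obstacle is the pointwise kernel estimate $|K_{d^*G}(x,y)| \lsim d(x,y)^{-(N-1)}$ underlying the closed case: this is where all the elliptic analysis lives, and pinning it down rigorously requires the standard parametrix construction for the Hodge Laplacian on a closed Riemannian manifold (or equivalently the short-time heat kernel asymptotics). Once that pointwise estimate is granted, every remaining step is an elementary integral bound or a direct application of the product-type chain homotopy.
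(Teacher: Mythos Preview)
Your argument is correct. The paper does not actually prove this proposition; it simply records it as a consequence of Lemma~7.13 of \cite{Gr} and points to related statements in \cite{CDMW} and \cite{Gu2}, remarking that the result is ``well-known as a dual to the linear isoperimetric inequality on a compact manifold.'' So you are supplying a proof where the paper only supplies a citation.

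Your route---Hodge primitive $\beta = d^*G\alpha$ on the sphere plus the parametrix bound $|K_{d^*G}(x,y)| \lsim d(x,y)^{-(N-1)}$, then the fiberwise integration operator $K$ to extend to $S^{4n-1}\times I$---is a standard and clean way to establish co-isoperimetric inequalities of this type. The only quibble is notational: you write $r^*\tilde\beta$ with $\tilde\beta \in \Omega^{2n-1}(S^{4n-1})$ and $r(x,t)=(x,0)\in S^{4n-1}\times I$, which does not type-check as stated; what you mean (and what makes the computation work) is $\pi^*\tilde\beta$ for the projection $\pi:S^{4n-1}\times I\to S^{4n-1}$, after which $r^*\alpha = \pi^*i_0^*\alpha$ and the cancellation $d\beta=\alpha$ goes through. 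You correctly flag the pointwise kernel estimate as the one place requiring genuine elliptic input; once that is granted, everything else is elementary.
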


Proposition \ref{lem:coiso} follows from Lemma 7.13 of \cite{Gr}. Other variants of this lemma appear in \cite{CDMW} and \cite{Gu2}, and the result is well-known as a dual to the linear isoperimetric inequality on a compact manifold.

Consider the $2n$-form $F^\ast \omega$, and notice that $||F^\ast \omega||_{L^\infty} \lsim \dil_{2n}(F)$. $F^\ast \omega$ is not closed, but $dF^\ast\omega = F^\ast d\omega$, so $ ||dF^\ast\omega|| \lsim \dil_{2n+1}(F) $. By the co-isoperimetric inequality, there is a $2n$-form $\alpha$ on $S^{4n-1} \times I$ so that $d\alpha = F^\ast d\omega$ and $ ||\alpha||_{L^\infty} \lsim \dil_{2n+1}(F) $. 
$F^\ast\omega - \alpha$ is  a closed $2n$-form, and hence exact, so by the co-isoperimetric inequality we can find $\beta \in \Omega^{2n-1}(S^{4n-1} \times I)$ with $d\beta = F^\ast \omega-\alpha$ and $ ||\beta||_{L^\infty} \lsim \dil_{2n}(F)+\dil_{2n+1}(F) $.

By Stokes' Theorem, we have that

\begin{equation}\label{eq:Stoke} \int_{S^{4n-1} \times \{1\}} \beta \wedge F^\ast \omega -  \int_{S^{4n-1} \times \{0\}} \beta \wedge F^\ast \omega = \int_{S^{4n-1} \times [0,1]} d(\beta \wedge F^\ast \omega). \end{equation}

Our goal is to relate the boundary terms to $H_\omega(F_i)$, with an error term involving $\dil_{2n+1}(F)$, and then expand $d(\beta \wedge F^\ast \omega)$ and bound all the terms individually. 

On the spheres $S^{4n-1} \times \{i\}$, for $i = 0,1$ we have that $dF^\ast\omega = F_i^\ast d\omega = 0$ since the maps $F_i$ have rank $\le 2n$. Therefore there is a form $\tilde{\beta}$ on $S^{4n-1}$ with $d\tilde{\beta} = F_i^\ast\omega$, and $||\tilde{\beta}|| \lsim \dil_{2n}(F)$. Thus $\int_{S^{4n-1} \times \{ i \}} \tilde{\beta}\wedge F^\ast\omega = H_w(F_i)$ for $i = 0,1$, so we have that

\begin{equation}\label{eq:endpoint_one} \int_{S^{4n-1} \times \{i\}} \beta \wedge F^\ast \omega = H_w(F_i) + \int_{S^{4n-1} \times \{i\}} (\beta-\tilde{\beta}) \wedge F^\ast \omega. \end{equation}

Now $d(\beta-\tilde{\beta}) = \alpha|_{S^{4n-1}}$, so there is a form $\gamma \in \Omega^{2n-1}(S^{4n-1})$ such that $d\gamma = \alpha$ and $||\gamma|| \lsim \dil_{2n+1}(F)$. $\beta-\tilde{\beta}-\gamma$ is a closed and exact $2n-1$-form. Thus we can find $\eta$ so that $d\eta = \beta - \tilde{\beta}-\gamma$. Then $d(\eta \wedge F^\ast \omega) = (\beta-\tilde{\beta}) \wedge F^\ast \omega - \gamma \wedge F^\ast\omega$ since $dF^\ast \omega|_{S^{4n-1} \times \{i\}} = 0$. Therefore by Stokes' Theorem 

\begin{align}\label{eq:endpoint_two} \bigl|\int_{S^{4n-1} \times \{i\}} (\beta-\tilde{\beta}) \wedge F^\ast \omega\bigr| &= \bigl|\int_{S^{4n-1} \times \{i\}} \gamma \wedge F^\ast\omega\bigr| \\
&\lsim \dil_{2n}(F)\dil_{2n+1}(F) \nonumber \end{align}

where the latter estimate follows from our estimates for $\gamma$ and $F^\ast \omega$. Combining (\ref{eq:Stoke}), (\ref{eq:endpoint_one}) and (\ref{eq:endpoint_two}) we obtain that

\begin{equation}\label{eq:Hopf_inequality} |H_\omega(F_1) - H_\omega(F_0)| \lsim \bigl|\int_{S^{4n-1}\times [0,1]} d(\beta \wedge F^\ast \omega)\bigr| + \dil_{2n}(F)\dil_{2n+1}(F). \end{equation}

It remains to estimate $d(\beta \wedge F^\ast \omega) = -\beta \wedge F^\ast d\omega + F^\ast(\omega \wedge \omega) - \alpha \wedge F^\ast \omega$. We have that $||\alpha \wedge F^\ast \omega|| \lsim ||\alpha||||F^\ast \omega|| \lsim (\dil_{2n}(F))(\dil_{2n+1}(F))$.  Similarly we can estimate $||F^\ast(\omega \wedge \omega)|| \lsim \dil_{4n}(F) \le \dil_{2n+1}(F)^{\frac{4n}{2n+1}}$. Finally we can estimate $||\beta \wedge F^\ast d\omega|| \lsim (\dil_{2n}(F)+\dil_{2n+1}(F))\dil_{2n+1}(F)$. Integrating all of these estimates over $S^{4n-1} \times [0,1]$ proves the theorem.

\end{proof}

We can now prove our main result

\begin{proof}[Proof of Theorem \ref{thm:main}]

We use the notation in the statement of Theorem \ref{thm:main}. In order to apply Theorem \ref{thm:main_inequality}, take $F_0 = i \circ G$, and $F_1$ to be any constant map. We can choose $\omega$ to be any extension of a volume form on $i(S^{2n})$ which vanishes outside some compact set, so that there exists $c$ depending only on $i$, with $||\omega||,||d\omega|| \le c$. Then $H_\omega(F_0) = \textup{Hopf}(G)$ and $H_\omega(F_1) = 0$. If there was a null-homotopy $F$ with $\dil_{2n}(F) \le C$ and $\dil_{2n+1}(F) < \epsilon$, then Theorem \ref{thm:main_inequality} implies

\[ |\textup{Hopf}(G)| \lsim C\epsilon+\epsilon^2+\epsilon^{\frac{4n}{2n+1}}. \]

But since $\textup{Hopf}(G)$ is nonzero, we cannot take $\epsilon$ arbitrarily small, which gives the result.

\end{proof}






\section{Proof of Theorem \ref{thm:construction}}\label{sec:construction}

For convenience, we restate Theorem \ref{thm:construction}

\begin{thm 1.3}
Let $G: S^m \rightarrow S^n$ be a smooth map with $m > n > \frac{m}{2}$. Then for any $\epsilon > 0$, $G$ extends to a map $F: B^{m+1} \rightarrow B^{n+1}$ with $\dil_{n+1}(F) < \epsilon$.
\end{thm 1.3}

Our strategy is to modify Guth's construction of maps with small $k$-dilation in \cite{Gu}, which we will now briefly describe. Guth started with a map $F_0: S^m \rightarrow S^n$, and constructed degree $1$-maps $\Phi: S^m \rightarrow S^m$ and $\Psi: S^n \rightarrow S^n$, so that $\Psi \circ F_0 \circ \Phi$ has small $k$-dilation.

$\Psi$ was constructed by splitting $S^n = A \cup B$ into two subsets. $\Psi(A)$ lies in a $(k-1)$-dimensional subcomplex of $S^n$, and hence $\dil_k(\Psi|_{A}) = 0$, while $\Psi|_{B}$ has bounded Lipschitz constant.

To construct $\Phi$, we only have to worry about $\dil_k(\Phi|_{\Phi^{-1}(F_0^{-1}(B))})$. Guth did so by constructing embeddings $F_0^{-1}(B) \rightarrow S^m$ which expanded all directions by a large factor. When $k$ is in the right range of dimensions, these embeddings are isotopic to the inclusion $F_0^{-1}(B) \hookrightarrow S^m$, and so extend to a diffeomorphism $\Phi^{-1}: S^m \rightarrow S^m$. Because these embedding expand all directions, $\Phi$ has small Lipschitz constant on the desired subset and therefore small $k$-dilation.

To prove Theorem \ref{thm:construction}, we will have to modify this argument to work so that $\Psi$ and $\Phi$ are now maps from $B^{n+1} \rightarrow B^{n+1}$ and $B^{m+1} \rightarrow B^{m+1}$ which fix the boundary of the balls, and therefore the resulting map is an extension.

\begin{proof}[Proof of Theorem \ref{thm:construction}]

For the following proof, we will take $B^{m+1},B^{n+1}$ to be unit balls of radius $1$, and equip $S^m$ and $S^n$ with the round metric. 

Fix a smooth extension $F_0: B^{m+1} \rightarrow B^{n+1}$ of $G$.  We will use the notation $A \lsim B$ to mean that there is a constant $c(F_0)$ depending only on $F_0$ so that $A \le cB$. Fix small constants $\delta > 0$ and $W \gsim 1$. $W$ will be chosen later depending only on $F_0$, and $\delta$ will be as small as we like and eventually taken to $0$. We will find maps $\Phi: B^{m+1} \rightarrow B^{m+1}$ and $\Psi: B^{n+1} \rightarrow B^{n+1}$ such that $\Phi|_{S^m},\Psi|_{S^n} = \Id$ and $\dil_{n+1}(\Psi \circ F_0 \circ \Phi) < \epsilon$ as $\delta \rightarrow 0$.

Without loss of generality the origin $O$ is a regular value of $F_0$. There is a radius $\frac{1}{2} \lsim r < 1$ so that there is a diffeomorphism $F_0^{-1}(B_r(O)) \cong F_0^{-1}(O) \times B_r(O)$ and the map $F_0|_{F_0^{-1}(B_r(O))}$ is given by projection onto the second factor.

Let $Q = \delta \ZZ^{n+1} \cap B_r(O)$. Define $V_W$ as the $W\delta$ neighborhood of $Q \in B_r(O)$. Notice that the complement $B_r(O)/V_W$ retracts relative to the boundary onto an $n$-dimensional complex, namely the union of the sphere $\partial B_r(O)$ with the $n$-dimensional lattice that is dual to $Q$. The map $\Psi$ will essentially be given by this retraction. 

\begin{lemma}\label{lem:squeeze}
There is a map $\Psi: B^{n+1} \rightarrow B^{n+1}$ with the following properties.
\begin{enumerate}
    \item $\Lip(\Psi) \lsim 1$
    \item $\dil_{n+1}(\Psi|_{B^{n+1}/V_W}) = 0$
    \item $\Psi|_{S^n} = \Id$
\end{enumerate}
\end{lemma}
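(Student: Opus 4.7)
The plan is to construct $\Psi$ piecewise: a cubical retraction inside $B_r(O)$ that pushes the complement of $V_W$ onto the $n$-complex $\partial B_r(O)\cup(\text{dual }n\text{-lattice})$, and a radial projection on the shell $B^{n+1}\setminus B_r(O)$ that collapses it onto $S^n$. A rescaling of the inside piece by $1/r$ makes the two glue continuously along $\partial B_r(O)$.

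For the cubical retraction, each $p\in Q$ sits at the center of a closed cube $\bar C_p = p+[-\delta/2,\delta/2]^{n+1}$, and choosing $W\gsim 1$ with $W<1/2$ guarantees $B_{W\delta}(p)\Subset C_p$. On each punctured cube $\bar C_p\setminus B_{W\delta}(p)$ I define the $\ell^\infty$-radial retraction
\[
\rho_p(x)\;=\;p+\tfrac{\delta}{2}\,\frac{x-p}{\|x-p\|_\infty},
\]
which is the identity on $\partial C_p$, has image in $\partial C_p$, and satisfies $|D\rho_p|\lsim 1/W\lsim 1$ by a direct $\ell^\infty$-polar coordinate calculation. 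Because the $\rho_p$ agree on shared faces (being the identity there), they patch into a single Lipschitz map. For cubes meeting but not contained in $B_r(O)$, I perturb the lattice by a small shift so that $\partial B_r(O)$ misses $Q$, apply $\rho_p$ on each partial cube, compose with the $1$-Lipschitz nearest-point projection onto $\bar B_r(O)$, and bend the result into the identity on $\partial B_r(O)$ in a collar of width $\lsim\delta$. This yields a map $\rho_0\colon B_r(O)\setminus V_W\to \partial B_r(O)\cup(\text{dual }n\text{-lattice})$ with $\rho_0|_{\partial B_r(O)}=\Id$ and Lipschitz constant $\lsim 1$.

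Finally I set
\[
\Psi(x)\;=\;\begin{cases}\rho_0(x)/r, & |x|\le r,\\ x/|x|, & |x|\ge r.\end{cases}
\]
On $\partial B_r(O)$ both formulas yield $x/r\in S^n$, so $\Psi$ is continuous on $B^{n+1}$. The shell piece is radial projection onto $S^n$: it restricts to $\Id$ on $S^n$, has $\Lip\le 1/r\lsim 1$, and has rank $n$ everywhere, so $\dil_{n+1}=0$ on the shell. The inside piece has $\Lip\lsim 1/(rW)\lsim 1$, and on $B_r(O)\setminus V_W$ its image lies in the $n$-dimensional set $\frac{1}{r}(\partial B_r(O)\cup(\text{dual }n\text{-lattice}))$, so $\dil_{n+1}=0$ there as well. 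The main obstacle is the collar interpolation for cubes straddling $\partial B_r(O)$, which requires keeping the Lipschitz bound uniform in $\delta$; this is routine but has to be verified carefully.
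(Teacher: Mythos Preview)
Your overall strategy matches the paper's: retract the complement of $V_W$ in $B_r$ onto the dual $n$-lattice, and collapse the outer shell to $S^n$. But two points are not handled.

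First, a minor omission: your $\rho_0$ is only defined on $B_r\setminus V_W$, so your formula $\Psi(x)=\rho_0(x)/r$ leaves $\Psi$ undefined on $V_W$. This is easily repaired by extending each $\rho_p$ across $B_{W\delta}(p)$ (for instance by a radial map onto the full cube with $\Lip\lsim 1/W$), but it has to be done to get a globally defined map with $\Lip(\Psi)\lsim 1$.

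Second, the collar step has a real gap. You need simultaneously $\rho_0|_{\partial B_r}=\Id$ and $\rho_0\bigl((B_r\setminus V_W)\cap\text{collar}\bigr)\subset \partial B_r\cup(\text{dual lattice})$. A straight-line bend $t\,\rho_p(x)+(1-t)x$ in a collar of width $\lsim\delta$ does keep the Lipschitz constant bounded (since $|\rho_p(x)-x|\lsim\delta$), but for $x\notin V_W$ the interpolated point is a generic interior point of a cube, not on any $n$-face or on $\partial B_r$; so $\dil_{n+1}(\Psi|_{B^{n+1}\setminus V_W})=0$ would fail on the collar. You flag the uniform Lipschitz bound as ``the main obstacle'', but the actual obstacle is keeping the image $n$-dimensional while interpolating to the identity. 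The paper sidesteps this entirely: it applies Guth's everywhere-defined lattice retraction $R$ only on $B_{3r/4}$, post-composes with a radial map $\Lambda$ that already sends all of $B^{n+1}\setminus B_{r/2}$ onto $S^n$, and then bridges the annulus $B_r\setminus B_{3r/4}$ with a short homotopy $H:S^n\times[0,1]\to S^n$ between $\Lambda\circ R|_{\partial B_{3r/4}}$ and $\Lambda|_{\partial B_{3r/4}}$. Because $H$ takes values in $S^n$, the whole transition region has $n$-dimensional image automatically, and no interpolation inside the cubical structure is ever needed.
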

\begin{proof}[Proof of Lemma \ref{lem:squeeze}]

Let $\tilde{\Sigma}^n$ be the $n$-dimensional lattice dual to $\delta\ZZ^{n+1} \subset \RR^{n+1}$. That is to say if $\Sigma$ is the unit cubical lattice, then $\tilde{\Sigma} = \delta\Sigma + (\frac{\delta}{2},\frac{\delta}{2},\ldots,\frac{\delta}{2})$. We will use the following construction from \cite{Gu}.

\begin{lemma}[Lemma 11.5 of \cite{Gu}]\label{lem:squeeze_guth}
Let $W > 0$ be any constant. Then there is a $\delta\ZZ^{n+1}$-periodic map $R: \RR^{n+1} \rightarrow \RR^{n+1}$ with the following properties.
\begin{enumerate}
    \item $R$ maps the complement of the $W\delta$ neighborhood of $\delta\ZZ^{n+1}$ onto $\tilde{\Sigma}$
    \item For any $y \in \RR^{n+1}$, $|R(y)-y| \lsim \delta$.
    \item $\Lip(R) \le c(W)$, in particular the Lipschitz constant depends only on $W$ and is independent of $\delta$.
\end{enumerate}
\end{lemma}

Using the map $R$, we will be able to construct $\Psi$. Let $\lambda: [0,1] \rightarrow [0,1]$ be a smooth increasing map such that 

\[ \lambda(x) = \begin{cases} \frac{x}{r} & x \le \frac{r}{4} \\
1 & x \ge \frac{r}{2}
\end{cases} \]

Fix $\lambda$ so that $\Lip(\lambda) \lsim 1$, and consider the smooth map $\Lambda: B^{n+1} \rightarrow B^{n+1}$ defined by $\Lambda(0) = 0$ and otherwise $\Lambda(x) = \lambda(||x||)\frac{x}{||x||}$. Notice that $\Lambda$ maps everything outside $B_{\frac{r}{2}}(O)$ to $S^n$, and that $\Lip(\Lambda) \lsim 1$. On concentric spheres about the origin of radius $r' \ge r/2$, $\Lambda$ is the dilation map $x \rightarrow \frac{x}{r'}$. In particular $\Lambda$ is the identity map on the boundary $S^n$.

By property (2) of the map $R$, the map $S^n \rightarrow S^n$ given by $\Lambda \circ R|_{\partial B_{\frac{3r}{4}}(O)}$ is homotopic to $\Lambda|_{\partial B_{\frac{3r}{4}}(O)}$ by a smooth homotopy $H: S^n \times [0,1] \rightarrow S^n$ with $\Lip(H) \lsim 1$. This is because $R$ moves points at most $c(F_0)\delta$ for some constant $c$, so $\Lambda \circ R|_{\partial B_{\frac{3r}{4}}(O)}$ moves points by at most a constant multiple of $\frac{\delta}{r}$ when viewed as a map $S^n \rightarrow S^n$.

We can now define $\Psi: B^{n+1} \rightarrow B^{n+1}$ as follows.

\[ \Psi(x) = \begin{cases} \Lambda(R(x)) & x \in B_{\frac{3r}{4}}(O) \\
H(\frac{3rx}{4||x||},\frac{4||x||}{r}-3) & x \in B_r(O)/B_{\frac{3r}{4}}(O) \\
\Lambda(x) & x \in B^{n+1}/B_r(O)
\end{cases}. \]

By modifying $H$ near $0$ and $1$, $\Psi$ can be made smooth. Since $H$, $\Lambda$, $R$ are all maps with $\Lip \lsim 1$, and $r \gsim 1$, we immediately obtain that $\Lip(\Psi) \lsim \frac{1}{r} \lsim 1$. Moreover, by the definition of $\Lambda$ and property (1) of $R$, $\Psi({B^{n+1}/V_W})$ lies in $\Lambda(\tilde{\Sigma}) \cup S^n$, which is an $n$-dimensional subcomplex of $B^{n+1}$. In particular we have that $\dil_{n+1}(\Psi|_{B^{n+1}/V_W}) = 0$. Finally we have that $\Psi|_{S^n} = \Lambda|_{S^n} = \Id$, as desired.

\end{proof}

With the construction of $\Psi$ in hand, let us turn our focus to the construction of $\Phi$. Let $U_W = F_0^{-1}(V_W)$.  Using the diffeomorphism $F_0^{-1}(B_r(O)) \cong F_0^{-1}(O) \times B_r$, we see that $U_W \cong F_0^{-1}(O) \times V_W$, and the map $F_0$ is given by projection onto the second factor. Let $h_0$, $h_1$ be the metrics on $F_0^{-1}(O)$, $V_W$ respectively given by restricting the standard metrics from the balls. Let $g_0 = h_0 + h_1$ be a metric on $U_W$ under the product identification. Note that $g_0$ is bi-Lipschitz to the restriction of the standard metric on $U_W$ with bi-Lipschitz constant depending only on $F_0$. Let $g_1 = \delta^2h_0 + h_1$ be a metric on $U_W$ given by shrinking the $F_0^{-1}(O)$ factor. Notice that $\Lip(F_0: (U_W,g_1) \rightarrow V_W) \lsim 1$, since all we did is shrink the fibers of $F_0$.

\begin{lemma}\label{lem:phi_construction}
We can choose $W \gsim 1$ so that there is a diffeomorphism $\Phi: B^{m+1} \rightarrow B^{m+1}$ with the following properties
\begin{enumerate}
    \item $\Phi|_{S^m} = \Id$
    \item $\Lip(\Phi: \Phi^{-1}(U_W)\rightarrow (U_W,g_1)) \lsim \delta^{\frac{m-n}{n+1}}$. Here $\Phi^{-1}(U_W)$ is equipped with the metric given by restricting the metric on $B^{m+1}$
\end{enumerate}
\end{lemma}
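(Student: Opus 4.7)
The plan is to adapt Guth's construction of maps of small $k$-dilation from \cite{Gu} to the ball setting with fixed boundary. In \cite{Gu}, for a map $F_0:S^m\to S^n$ with $k>(m+1)/2$, one constructs a self-diffeomorphism $\Phi$ of $S^m$ so that $F_0\circ\Phi$ has small $k$-dilation; here we need the analogue for extensions $F_0:B^{m+1}\to B^{n+1}$ with $k=n+1$, whose dimension constraint $n+1>(m+2)/2$ is exactly the hypothesis $n>m/2$. The starting point is the decomposition $U_W=\sqcup_{q\in Q} T_q$ into $N\sim\delta^{-(n+1)}$ tubes $T_q\cong F_0^{-1}(O)\times B_{W\delta}(q)$, where in the metric $g_1$ each tube has diameter $\lsim\delta$ and total $(m+1)$-volume $\sim W^{n+1}\delta^{m+1}$, much smaller than in the standard metric.

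The first step is the local model: for each $q\in Q$, I would choose a tubular neighborhood $B_q\subset B^{m+1}$ of a small embedded copy of $F_0^{-1}(O)$, together with a diffeomorphism $\iota_q:B_q\to T_q$ that is a fiberwise scaling, so that $\Lip(\iota_q:B_q\to(T_q,g_1))\lsim\delta^{(m-n)/(n+1)}$ with respect to the standard metric on $B_q$. Concretely, the $F_0^{-1}(O)$ factor is scaled to match the shrunken fiber in $g_1$ and the $B_{W\delta}$ factor is scaled to match the width of the tubular neighborhood. The radii of $B_q$ are calibrated both to achieve the claimed Lipschitz bound and to guarantee that all $N$ regions $B_q$ fit pairwise disjointly inside the interior of $B^{m+1}$; the constant $W\gsim 1$ is chosen just large enough to make this packing feasible while remaining independent of $\delta$.

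The main obstacle is the third step: extending the disjoint union $\sqcup_q\iota_q$ to a global diffeomorphism $\Phi:B^{m+1}\to B^{m+1}$ with $\Phi|_{S^m}=\Id$. This amounts to an ambient isotopy of $B^{m+1}$ relative to $S^m$ carrying the original configuration $\sqcup_q T_q$ to the new configuration $\sqcup_q B_q$. Following \cite{Gu}, such an isotopy is built by sliding each tube individually along a smooth arc in the interior of $B^{m+1}$ to its target region; the dimension hypothesis $n>m/2$ ensures, via general position, that the embedded fibers and sliding arcs can be arranged pairwise disjointly, so that the sliding stays an isotopy of embeddings at every moment. The isotopy extension theorem then yields the required global diffeomorphism, and the Lipschitz estimate in property (2) is immediate from the local construction, since $\Phi$ restricted to $B_q=\Phi^{-1}(T_q)$ coincides with $\iota_q$. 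The hardest part is carrying out the isotopy while keeping it compatible with the Lipschitz bound and fixing the boundary sphere.
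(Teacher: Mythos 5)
Your overall strategy --- re-embed the tubes of $U_W$ by an expanding embedding isotopic to the inclusion, then invoke the isotopy extension theorem rel $S^m$ --- is the same as the paper's. But the paper does not rebuild that embedding from scratch: it observes that $U_W$ may be viewed inside $S^{m+1}$ (collapse the boundary spheres), cites Guth's quantitative embedding lemma (Lemma 11.3 of \cite{Gu}) verbatim to obtain an embedding $I:(U_W,g_1)\rightarrow S^{m+1}$ isotopic to the inclusion which expands all lengths by $L \gsim \delta^{\frac{n-m}{n+1}}$, composes with a uniformly expanding map $S^{m+1}\setminus\{p\}\rightarrow (1-\delta)B^{m+1}$, and only then extends the isotopy to a diffeomorphism of $B^{m+1}$ fixing a neighborhood of the boundary. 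Everything you defer to your ``third step'' is precisely the content of that lemma, and your sketch of it is too thin to stand alone: producing pairwise disjoint images of the tubes is not enough, since you must also exhibit an isotopy of the entire configuration back to the original inclusion (e.g.\ by moving one tube at a time so that its $(m-n+1)$-dimensional trace misses the remaining $(m-n)$-dimensional fibers --- this is where $n>\frac{m}{2}$ actually enters, not via disjointness of the traces from each other, which fails in the boundary case $m=2n-1$), and you must extract the quantitative expansion factor. This is several pages of \cite{Gu}; it should be cited, as the paper does, or genuinely carried out.

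There is also a concrete quantitative problem with your ``calibration'' step. A diffeomorphism $\iota_q: B_q\rightarrow (T_q,g_1)$ with $\Lip(\iota_q)\le \lambda$ forces $\textup{Vol}(B_q)\ge \lambda^{-(m+1)}\textup{Vol}(T_q,g_1)$. Taking $\lambda=\delta^{\frac{m-n}{n+1}}$ and using your own count $\textup{Vol}(T_q,g_1)\sim W^{n+1}\delta^{m+1}$, the $N\sim\delta^{-(n+1)}$ regions $B_q$ must have total volume $\gsim W^{n+1}\delta^{-\frac{(m-n)^2}{n+1}}\rightarrow\infty$, so they cannot be packed disjointly into $B^{m+1}$: the two requirements you propose to calibrate simultaneously are incompatible, and the packing feasibility you assert would fail. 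The volume count caps an all-directions contraction factor at $\lambda\gsim\delta^{\frac{m-n}{m+1}}$ (which still yields $\dil_{n+1}(F|_{A_1})\lsim\delta^{\frac{(m-n)(n+1)}{m+1}}\rightarrow 0$ downstream, so the theorem is unaffected), and this is the sort of factor the tube-rearrangement can actually deliver. Finally, the constraint on $W$ runs opposite to what you wrote: enlarging $W$ fattens the tubes and makes the rearrangement harder, so $W$ is fixed by the embedding lemma independently of $\delta$ rather than chosen ``just large enough'' for packing.
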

\begin{proof}[Proof of Lemma \ref{lem:phi_construction}]
Note that $U_W$ can equally be viewed as the preimage of a map $S^{m+1} \rightarrow S^{n+1}$ given by mapping $F_0$ and collapsing the boundaries to a point, so Guth's quantitative embedding applies.

\begin{lemma}[Lemma 11.3 of \cite{Gu}]\label{lem:embedding_guth}
If $n > \frac{m}{2}$, there exists $W > 0$ independent of $\delta$ such that there is an embedding $I: (U_W,g_1) \rightarrow S^{m+1}$ isotopic to the inclusion $U_W \rightarrow S^{m+1}$ such that $I$ increases all lengths by a factor $L \gsim \delta^{\frac{n-m}{n+1}}$
\end{lemma}

This isotopy can be taken to miss a point $p$. When $\delta$ is small, $F_{0}^{-1}(B_r(O))$ lies in $(1-\delta)B^{m+1}$, and there is a $c$-expanding map $P: S^{m+1}/p \rightarrow B^{m+1}(1-\delta)$ for $c \gsim 1$. Then $P \circ I$ is an embedding isotopic to the inclusion which expands all lengths by a factor of $cL \gsim \delta^{\frac{n-m}{n+1}}$.
 Since this isotopy lies in $(1-\delta)B^{m+1}$, we can extend it to a diffeomorphism $\Phi^{-1}$ of $B^{m+1}$ which is the identity near the boundary. Since $\Phi^{-1}$ is expanding on $U_W$, we immediately obtain that $\Phi|_{\Phi^{-1}(U_W)}$ has small Lipschitz constant, as desired.
\end{proof}

We can now complete the proof of Theorem \ref{thm:construction}. Let $F = \Psi \circ F_0 \circ \Phi$. Let $A_1 = \Phi^{-1}(U_W)$, and $A_2 = B^{m+1}/A_1$. By construction, $F_0 \circ \Phi(A_2) \subset B^{n+1}/V_W$, and so by Lemma \ref{lem:squeeze}, we have that $\dil_{n+1}(F|_{A_2}) = 0$. On $A_1$, we have that

\[ \Lip(F|_{A_1}) \le \Lip(\Psi)\Lip(F_0)\Lip(\Phi|_{A_1}) \lsim \delta^{\frac{m-n}{n+1}}, \] which follows from Lemmas \ref{lem:squeeze} and \ref{lem:phi_construction} as $\Lip(F_0: (U_W,g_1) \rightarrow V_W \lsim 1$. Then by Proposition \ref{prop:k-relation}, we have $\dil_{n+1}(F|_{A_1}) \le \Lip(F|_{A_1})^{n+1} \lsim \delta^{m-n}$. Thus we have that $\dil_{n+1}(F) \lsim \delta^{m-n}$, and taking $\delta \rightarrow 0$ we can make the $(n+1)$-dilation as small as we like. Lastly, since $\Phi|_{S^m} = \Id$ and $\Psi|_{S^n} = \Id$, we see that $F$ agrees with $F_0$ on $S^m$, and is therefore an extension of $G$ to the ball.

\end{proof}






\bibliographystyle{plain}
\bibliography{main}

\end{document}